\title[Definable sets with definable elements]{When does every definable nonempty set have a definable element?}
\author[Dorais]{\Francois\ G. Dorais}
 \address[F.~Dorais]
        {Department of Mathematics,
         University of Vermont,
          Burlington, VT 05405}
 \email{francois.dorais@uvm.edu}
 \urladdr{http://logic.dorais.org/}
\author[Hamkins]{Joel David Hamkins}
 \address[J. D. Hamkins]
        {Mathematics, Philosophy, Computer Science,
          The Graduate Center of The City University of New York,
          365 Fifth Avenue, New York, NY 10016
          \&
          Mathematics,
          College of Staten Island of CUNY,
          Staten Island, NY 10314}
\email{jhamkins@gc.cuny.edu}
\urladdr{http://jdh.hamkins.org}
\thanks{The research of the second author has been supported by grant \#69573-00 47 from the CUNY Research Foundation. This inquiry grew out of a series of questions and answers posted on MathOverflow \cite{MO10415Hamkins2010:DefinableCollectionsWithoutDefinableMembers, MO180734Hamkins2014:ConsistentEveryDefinableSetHasDefinableMember, MO180850Hamkins2014:EverySigma2DefinableSetHasODelement} and the exchange of the authors there. Commentary concerning this article can be made at http://jdh.hamkins.org/definable-sets-with-definable-elements.}
\newtheorem{theorem}{Theorem}
\newtheorem*{maintheorem*}{Main Theorem}
\newtheorem*{maintheorems*}{Main Theorems}
\newtheorem*{corollary*}{Corollary}
\newtheorem*{corollaries*}{Corollaries}
\newtheorem*{question*}{Question}
\newtheorem*{questions*}{Questions}
\newtheorem*{mainquestion*}{Main Question} 
\newtheorem*{openquestion*}{Open Question} 
\newcommand{\QED}{\end{proof}}
\def\proclaim[#1]{{\bf #1}}
\def\BF#1.{{\bf #1.}}
\newcommand{\Francois}{Fran\c{c}ois}
\newcommand{\Godel}{G\"odel}
\renewcommand{\P}{{\mathbb P}}
\newcommand{\R}{{\mathbb R}}
\newcommand{\of}{\subseteq}
\newcommand{\satisfies}{\models}
\newcommand{\smalllt}{\mathrel{\mathchoice{\raise2pt\hbox{$\scriptstyle<$}}{\raise1pt\hbox{$\scriptstyle<$}}{\raise0pt\hbox{$\scriptscriptstyle<$}}{\scriptscriptstyle<}}}
\newcommand{\smallleq}{\mathrel{\mathchoice{\raise2pt\hbox{$\scriptstyle\leq$}}{\raise1pt\hbox{$\scriptstyle\leq$}}{\raise1pt\hbox{$\scriptscriptstyle\leq$}}{\scriptscriptstyle\leq}}}
\newcommand{\leqtheta}{{{\smallleq}\theta}}
\newcommand{\boolval}[1]{\mathopen{\lbrack\!\lbrack}\,#1\,\mathclose{\rbrack\!\rbrack}}
\def\[#1]{\boolval{#1}}
\newbox\gnBoxA
\newdimen\gnCornerHgt
\newdimen\gnArgHgt
\def\gcode #1{%
\setbox\gnBoxA=\hbox{$#1$}%
\gnArgHgt=\ht\gnBoxA%
\ifnum     \gnArgHgt<\gnCornerHgt \gnArgHgt=0pt%
\else \advance \gnArgHgt by -\gnCornerHgt%
\fi \raise\gnArgHgt\hbox{\tiny$\ulcorner$} \box\gnBoxA %
\raise\gnArgHgt\hbox{\tiny$\urcorner$}}
\newcommand{\UnderTilde}[1]{{\setbox1=\hbox{$#1$}\baselineskip=0pt\vtop{\hbox{$#1$}\hbox to\wd1{\hfil$\sim$\hfil}}}{}}
\newcommand{\Undertilde}[1]{{\setbox1=\hbox{$#1$}\baselineskip=0pt\vtop{\hbox{$#1$}\hbox to\wd1{\hfil$\scriptstyle\sim$\hfil}}}{}}
\newcommand{\undertilde}[1]{{\setbox1=\hbox{$#1$}\baselineskip=0pt\vtop{\hbox{$#1$}\hbox to\wd1{\hfil$\scriptscriptstyle\sim$\hfil}}}{}}
\newcommand{\UnderdTilde}[1]{{\setbox1=\hbox{$#1$}\baselineskip=0pt\vtop{\hbox{$#1$}\hbox to\wd1{\hfil$\approx$\hfil}}}{}}
\newcommand{\Underdtilde}[1]{{\setbox1=\hbox{$#1$}\baselineskip=0pt\vtop{\hbox{$#1$}\hbox to\wd1{\hfil\scriptsize$\approx$\hfil}}}{}}
\renewcommand{\implies}{\mathrel{\rightarrow}}
\def\<#1>{\left\langle#1\right\rangle}
\newcommand{\Ord}{\mathord{{\rm Ord}}}
\newcommand{\ZFC}{{\rm ZFC}}
\newcommand{\ZF}{{\rm ZF}}
\newcommand{\GCH}{{\rm GCH}}
\newcommand{\HOD}{{\rm HOD}}
\newcommand{\OD}{{\rm OD}}
\newcommand{\cell}[1]{\boxit{\hbox to 17pt{\strut\hfil$#1$\hfil}}}
\newcommand{\head}[2]{\lower2pt\vbox{\hbox{\strut\footnotesize\it\hskip3pt#2}\boxit{\cell#1}}}
\newcommand{\boxit}[1]{\setbox4=\hbox{\kern2pt#1\kern2pt}\hbox{\vrule\vbox{\hrule\kern2pt\box4\kern2pt\hrule}\vrule}}
\newcommand{\Col}[3]{\hbox{\vbox{\baselineskip=0pt\parskip=0pt\cell#1\cell#2\cell#3}}}
\newcommand{\tapenames}{\raise 5pt\vbox to .7in{\hbox to .8in{\it\hfill input: \strut}\vfill\hbox to
.8in{\it\hfill scratch: \strut}\vfill\hbox to .8in{\it\hfill output: \strut}}}
\newcommand{\Head}[4]{\lower2pt\vbox{\hbox to25pt{\strut\footnotesize\it\hfill#4\hfill}\boxit{\Col#1#2#3}}}
\newcommand{\Dots}{\raise 5pt\vbox to .7in{\hbox{\ $\cdots$\strut}\vfill\hbox{\ $\cdots$\strut}\vfill\hbox{\
$\cdots$\strut}}}
\newcommand{\df}{\it} 
\begin{document}

\begin{abstract}
 The assertion that every definable set has a definable element is equivalent over \ZF\ to the principle $V=\HOD$, and indeed, we prove, so is the assertion merely that every $\Pi_2$-definable set has an ordinal-definable element. Meanwhile, every model of \ZFC\ has a forcing extension satisfying $V\neq\HOD$ in which every $\Sigma_2$-definable set has an ordinal-definable element. Similar results hold for $\HOD(\R)$ and $\HOD(\Ord^\omega)$ and other natural instances of $\HOD(X)$.
\end{abstract}

\maketitle

\noindent

\section{Introduction}

It is not difficult to see that the models of \ZF\ set theory in which every definable nonempty set has a definable element are precisely the models of $V=\HOD$. Namely, if $V=\HOD$, then there is a definable well-ordering of the universe, and so the $\HOD$-least element of any definable nonempty set is definable; and conversely, if $V\neq\HOD$, then the set of minimal-rank non-\OD\ sets is definable, but can have no definable element.

In this brief article, we shall identify the limit of this elementary observation in terms of the complexity of the definitions. Specifically, we shall prove that $V=\HOD$ is equivalent to the assertion that every $\Pi_2$-definable nonempty set contains an ordinal-definable element, but that one may not replace $\Pi_2$-definability here by $\Sigma_2$-definability, in light of theorem \ref{Theorem.VnotHODbutSigma2SetsHaveODMembers}, which shows that every model of \ZFC\ has a forcing extension satisfying $V\neq\HOD$ in which every $\Sigma_2$-definable nonempty set contains an ordinal-definable element. That theorem is proved in a manner reminiscent of several proofs of the maximality principle \cite{Hamkins2003:MaximalityPrinciple}, where one undertakes a forcing iteration attempting at each stage to force and then preserve a given $\Sigma_2$ assertion.

\section{Background, and a metamathematical issue}

An object $a$ in a model $M$ is {\df definable}, if there is a formula $\varphi$ in the language of $M$ such that $a$ is the unique satisfying instance of $\varphi$ in $M$, that is, if $M\satisfies\varphi[x]$ just in case $x=a$. The object is $\Sigma_2$-definable or $\Pi_2$-definable, for example, if there is a defining formula $\varphi$ of that level of complexity, respectively. In the context of set theory, a set $a$ is {\df ordinal-definable}, if there it is the unique satisfying instance of some assertion $\varphi(a,\vec\alpha)$ in the language of set theory, using ordinal parameters $\vec\alpha$. Since there is a definable ordinal pairing function, it suffices to have at most a single ordinal parameter $\alpha$ in place of $\vec\alpha$. More generally, one may consider the concept of $X$-definability, allowing parameters from $X$.

We should like to emphasize the subtle metamathematical point that the concept of definability is an external essentially model-theoretic notion in set theory. It doesn't in general make sense, for example, to refer in set theory to ``the class of definable elements,'' or to say that ``every such-and-such kind of set has a definable element,'' for there are models $M\satisfies \ZFC$ in which the collection of definable elements of $M$ is not a class in $M$. (See further extended discussion of this issue in \cite{HamkinsLinetskyReitz2013:PointwiseDefinableModelsOfSetTheory}.)
In this sense, the assertion that ``every definable nonempty set has a definable element'' is not {\it prima facie} expressible in the language of set theory. Nevertheless, it turns out for the reasons we mentioned in the opening paragraph of this article that a model $M\satisfies\ZF$ has the (external, model-theoretic) property that every definable nonempty set has a definable element just in case it is a model of $V=\HOD$, and so in this sense the property is actually expressible in the language of set theory.

Because of the tension with Tarski's theorem on the non-definability of truth, it is actually a remarkable fact observed by \Godel\ that the class $\OD$ of ordinal-definable sets nevertheless {\it is} a class in \ZF. It follows that the class $\HOD$ of hereditarily ordinal-definable sets is also a class, and we may express the hypothesis $V=\HOD$, asserting that every set is hereditarily ordinal-definable, as a sentence in the language of set theory. The central reason for the definability of ordinal-definability, of course, is the reflection theorem, which implies that if an object $a$ is defined by the formula $\varphi(\cdot,z)$, with parameter $z$, then there is some ordinal $\theta$ such that $a,z\in V_\theta$ and $a$ is defined by $\varphi(\cdot,z)$ inside $V_\theta$. Using this, we may express $a\in\OD$ with the assertion that there is some ordinal $\theta$, such that $a$ is definable with ordinal parameters inside the structure $\<V_\theta,\in>$. This definition of $\OD$ agrees with the external model-theoretic concept of ordinal-definability in any model of \ZF.\footnote{There is a further subtle issue here with nonstandard models, for if an $\omega$-nonstandard model $M$ thinks an object $a$ is ordinal-definable in some $V_\theta^M$, then it may be thinking this because it has a nonstandard formula $\varphi$ that it thinks defines $a$ with ordinal parameters $\vec\alpha$ in that model, and furthermore, the number of ordinal parameters $\vec\alpha$ may not be actually finite but only nonstandard finite in $M$. These issues can both be overcome as follows: first, inside $M$ we may code $\vec\alpha$ with a single ordinal using what $M$ thinks is the ordinal pairing function, and thereby reduce to the case of a single ordinal parameter $\gcode{\vec\alpha}$; next, we define $a$ in $M$ as, ``the unique object satisfying the formula coded by $\gcode{\varphi}$ using the parameters coded by $\gcode{\vec\alpha}$ in the structure $V_\theta$.'' The point is that this still defines $a$, using the code $\gcode{\varphi}$ as an additional ordinal parameter (since \Godel\ codes of formulas are natural numbers and therefore also ordinals).}

The reader may find it useful to know of the characterization of
the $\Sigma_2$ properties as the {\df semi-local properties}, those
which are equivalent to an assertion of the form $\exists\theta\
V_\theta\models\psi$, where $\psi$ can have any complexity. For a proof and further discussion of this folklore result, see the second author's blog post \href{http://jdh.hamkins.org/local-properties-in-set-theory/}{``Local properties in set theory,''} \cite{Hamkins.blog2014:Local-properties-in-set-theory}. In
particular, whenever a $\Sigma_2$ property $\varphi(A)$ is true of
a set $A$, it is because there is some $V_\theta$ satisfying
something about $A$. We may therefore preserve that $\Sigma_2$
fact about $A$, while forcing over $V$, provided that we only
force up high and preserve $V_\theta$, the rank-initial-segment of
the universe up to $\theta$.

\section{Definable sets with definable members}

Let us now state and prove the basic equivalences, which identify $\Pi_2$ as the level of complexity needed for the equivalence mentioned in the introduction of the article.

\begin{theorem}\label{Theorem.EquivalencesToV=HOD}
The following are equivalent in any model $M$ of \ZF:
 \begin{enumerate}
   \item $M$ is a model of $\text{ZFC}+\text{V}=\text{HOD}$.
   \item $M$ thinks there is a definable well-ordering of the universe.
   \item Every definable nonempty set in $M$ has a definable element.
   \item Every definable nonempty set in $M$ has an ordinal-definable element.
   \item Every ordinal-definable nonempty set in $M$ has an ordinal-definable element.
   \item Every $\Pi_2$-definable nonempty set in $M$ has an ordinal-definable element.
 \end{enumerate}
\end{theorem}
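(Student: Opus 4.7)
My plan is to close the cycle $(1) \Rightarrow (2) \Rightarrow (3) \Rightarrow (4) \Rightarrow (6) \Rightarrow (1)$, inserting $(5)$ via the side implications $(1) \Rightarrow (5) \Rightarrow (4)$. Both $(1) \Rightarrow (2)$ and $(1) \Rightarrow (5)$ use the canonical $\HOD$ well-ordering of the universe, which is definable without parameters and is itself $\OD$; so under $V = \HOD$ every nonempty $\OD$ set has an $\OD$-least member. From $(2)$, a definable nonempty set has a definable least element, giving $(3)$, and the implications $(3) \Rightarrow (4) \Rightarrow (6)$ and $(5) \Rightarrow (4)$ are immediate, since definable $\subseteq \OD$ and $\Pi_2$-definable $\subseteq$ definable.

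The heart of the argument is $(6) \Rightarrow (1)$, which I would prove by contrapositive. Assume $V \neq \HOD$ and let $\alpha$ be the least rank of a non-$\OD$ set; then $S = \{x : \rank(x) = \alpha \wedge x \notin \OD\}$ is a nonempty set, every element is non-$\OD$, and hence $S$ has no $\OD$ element. It will suffice to show that $S$ is $\Pi_2$-definable, for that refutes $(6)$ and forces $V = \HOD$ (which over $\ZF$ also yields $\ZFC$ via the definable well-ordering).

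The main obstacle is the complexity calculation, because the defining condition ``$x \notin \OD$ and $V_{\rank(x)} \subseteq \OD$'' naively appears as a conjunction of a $\Pi_2$ statement and a $\Sigma_2$ statement, which is only $\Delta_3$ in general. I would exploit upward absoluteness of both semi-local properties across the $V_\theta$ hierarchy: for any $\theta$, if $V_\theta$ witnesses the $\Sigma_2$ fact $V_{\rank(x)} \subseteq \OD$, then so does every $V_{\theta'}$ with $\theta' \geq \theta$; and $x \notin \OD$ holds in $V$ exactly when $V_\theta \not\models (x \in \OD)$ for every $\theta$. I then interleave the two sets of quantifiers:
\[
x \in S \iff \forall \theta\, \exists \theta' \geq \theta\, \bigl[\, V_\theta \not\models (x \in \OD)\ \wedge\ V_{\theta'} \models (V_{\rank(x)} \subseteq \OD)\,\bigr].
\]
The body is evaluated inside the set-sized structures $V_\theta, V_{\theta'}$, so it is $\Delta_1$ in the parameters $\theta, \theta', x$, making the displayed formula $\Pi_2$. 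Equivalence with the original condition is clear in both directions: the universal quantifier in $\theta$ delivers $x \notin \OD$, while the inner existential clause holding cofinally in $\theta'$ yields the $\Sigma_2$ fact $V_{\rank(x)} \subseteq \OD$; the converse direction uses exactly the upward absoluteness noted above.
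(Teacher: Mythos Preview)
Your cycle of easy implications is fine, and the contrapositive setup for $(6)\Rightarrow(1)$ is the right move. The gap is in the complexity analysis of your interleaved formula. You assert that the body
\[
V_\theta \not\models (x \in \OD)\ \wedge\ V_{\theta'} \models (V_{\rank(x)} \subseteq \OD)
\]
is $\Delta_1$ in $\theta,\theta',x$, but this is not so: the relation ``$u=V_\theta$'' is $\Pi_1$ and not $\Sigma_1$ in the L\'evy hierarchy, because expressing that $u$ contains \emph{every} subset of each of its elements requires an unbounded universal quantifier. Consequently each conjunct of your body is only $\Delta_2$, and the prenex count gives $\forall\theta\,\exists\theta'\,[\Delta_2]$, which is $\Pi_3$, not $\Pi_2$. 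The upward-persistence of the $\Sigma_2$ clause does let you interleave the \emph{outer} quantifiers, but it does not neutralize the hidden $\Pi_1$ cost of forming $V_{\theta'}$ inside the matrix.

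The paper confronts exactly this obstacle: it observes that the set $A$ of minimal-rank non-$\OD$ sets is naively only $\Sigma_2\wedge\Pi_2$-definable and does not attempt to push $A$ itself down to $\Pi_2$. Instead it replaces $A$ by the augmented set $U=A\times V_\theta$, where $\theta$ is least so that $V_\theta$ already witnesses that $A$ is the set of minimal-rank non-$\OD$ sets. The point is that once $V_\theta$ is carried along as the second coordinate, all of the $\Sigma_2$ content (``everything below is $\OD$,'' ``$\theta$ is minimal,'' etc.) becomes a bounded assertion about what the set $B=V_\theta$ satisfies, hence $\Delta_0$ in $U$; asserting $B$ really is some $V_\theta$ is $\Pi_1$; and the only genuinely unbounded clause left is ``no element of $A$ lies in $\OD$,'' which is $\Pi_2$. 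So $U$ is $\Pi_2$-definable and has no $\OD$ member, since any $\OD$ pair $(a,b)$ would make $a\in\OD$. To repair your argument you would need either a genuine $\Delta_1$ reformulation of the body (which does not seem available, for the absoluteness reason above) or to adopt this packaging trick.
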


\begin{proof}
All the implications, except one, are straightforward.

($1\implies 2$) The usual HOD order is a definable well-ordering
of the universe.

($2\implies 3$) Select the least element with respect to the definable
order.

($3\implies 4$) Immediate.

($4\implies 5$) If there is an \OD-set with no \OD\ member, then the \OD-least such set is definable.

($5\implies 6$) Immediate.

($6\implies 1$) This is the non-trivial implication. To prove it, it is tempting to consider the set $A$ of minimal-rank non-$\OD$ sets, as in the proof of the implication ($4\to 1$) mentioned in the opening of this article. If $V\neq\HOD$, then this is a definable nonempty set with no ordinal-definable elements. How complex is the definition of $A$? It is not difficult to see that $A$ is $\Sigma_3$-definable. One can press this a bit harder to see that $A$ is $\Sigma_2\wedge\Pi_2$-definable, characterized by the following properties: $A$ is not
empty; all elements of $A$ have the same rank; every element of
$A$ is not in OD; every set of rank less than an element of $A$ is
in OD; every set not in $A$, but of the same rank as an element of
$A$, is in OD. Each of these properties is either $\Sigma_2$ or
$\Pi_2$, making the set $A$ to be $\Sigma_2\wedge\Pi_2$-definable.
Specifically, the first two requirements are $\Sigma_2$, being
witnessed in a rank-initial segment of the universe; the third is
$\Pi_2$; the fourth and fifth are both $\Sigma_2$, since they are
true just in case there is a large $V_\theta$ which believes them
to be true. This is close to optimal, as far as defining $A$ is concerned, since it is not provably $\Sigma_2$-definable, as in any model of $V\neq\HOD$, we could perform forcing up high so as to preserve any given $\Sigma_2$ assertion, while making some element of $A$ to be coded into the \GCH\ pattern and hence ordinal definable.

So in order to prove the implication ($6\implies 1$), we shall augment $A$ with more information. Specifically, let $A$ be the set of minimal-rank non-OD sets. That is, $A$ consists of all non-OD sets of rank $\alpha$, where $\alpha$ is minimal such that there is any non-OD set of rank $\alpha$. The desired set will be $U=A\times V_\theta$, where $\theta$ is the smallest ordinal such that $A\in V_\theta$ and $V_\theta\models A$ is the set of minimal-rank non-OD sets.

The set $U$ is defined by the following property: $U$ consists of
the cartesian product $U=A\times B$ of two sets $A$ and $B$ such
that the elements of $A$ are not in \OD\ and the set $B$ has the form $B=V_\theta$ for some ordinal
$\theta$ such that $A\in V_\theta$ and $V_\theta\models ``A$ is the
set of minimal-rank OD sets and there is no $\theta'<\theta$ for
which $V_{\theta'}\models A$ is the set of minimal-rank non-OD
sets.''

This property altogether has complexity $\Pi_2$, due mainly to the
clause asserting that elements of $A$ are not in \OD. The part requiring that $U$ has the form
$A\times B$ is $\Delta_0$. The part asserting that $B$ has
the form $B=V_\theta$ for some ordinal $\theta$ has complexity
$\Pi_1$, because this is true provided $B$ is
transitive and satisfies some minimal set theory such that it
thinks it is a $V_\theta$ and such that $B$ contains all subsets
of any of its elements, so that it is using the true power set
operation. The properties asserting that $V_\theta$, that is, $B$,
satisfies certain complicated assertions has complexity
$\Delta_0$, since all quantifiers are bounded by $B$ and hence
ultimately by $U$. And finally, asserting that the elements of $A$
are not ordinal-definable has complexity $\Pi_2$, since the relation
``$x\in\text{OD}$'' has complexity $\Sigma_2$, as any instance of
ordinal-definability reflects to some $V_\theta$ and hence is
locally verifiable; thus, the assertion $\forall x\in A\
x\notin\text{OD}$ has complexity $\Pi_2$.

So altogether, the set $U=A\times V_\theta$ is $\Pi_2$-definable,
but it can have no ordinal-definable elements, since every element
of $U$ has the form $(a,b)$ for some $a\in A, b\in V_\theta$, and
if the pair $(a,b)$ were ordinal-definable, then $a$ would be
ordinal-definable, contradicting $a\in A$ and the fact that every
member of $A$ is not ordinal-definable.
\end{proof}

Note that the proof of $(6\to 1)$ is completely uniform, in that the definition
of the set $U$ does not depend on the model in any way. Rather, we
have a $\Pi_2$ definition that $\ZFC+V\neq\HOD$ proves is a
nonempty set disjoint from OD.

Let us now show that the $\Pi_2$-definability clause in statement 6 of the main theorem cannot be changed to $\Sigma_2$-definability.

\begin{theorem}\label{Theorem.VnotHODbutSigma2SetsHaveODMembers}
Every model of $\ZFC$ has a forcing extension
satisfying $V\neq\HOD$, in which every $\Sigma_2$-definable set
has a definable element.
\end{theorem}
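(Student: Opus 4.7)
The plan is to construct $\Vbar$ by an $\Ord$-length class iteration reminiscent of Hamkins's consistency proof of the maximality principle \cite{Hamkins2003:MaximalityPrinciple}. At each stage the iteration attempts to force and then preserve the ``button'' $B_\psi$, namely the $\Sigma_2$-assertion $\exists x\in\OD\,\psi(x)$, for a $\Sigma_2$-formula $\psi$ in one free variable with no parameters. By the semi-local characterization of $\Sigma_2$ recalled in the preceding section, once $B_\psi$ holds it is witnessed in some $V_\theta$, and this truth is preserved by any subsequent forcing that acts only above rank $\theta$; thus $B_\psi$ genuinely behaves as a button.

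Passing to a preliminary forcing extension if needed, we assume $V\models V=\HOD+\GCH$. Fix an $\Ord$-enumeration $\langle\psi_\alpha:\alpha\in\Ord\rangle$ of parameter-free $\Sigma_2$-formulas in one free variable with each formula appearing cofinally often, and define an Easton-support class iteration $\P=\langle\P_\alpha,\dot\Q_\alpha:\alpha\in\Ord\rangle$ as follows. At stage $0$, use $\Add(\omega,1)$ to adjoin a Cohen real $c$, which will serve as the witness to $V\neq\HOD$ in the final model. At stage $\alpha>0$, fix a cardinal $\theta_\alpha$ strictly above the natural support of $\P_\alpha$ and above any rank used in previously pressed buttons; in $V^{\P_\alpha}$, if there is a forcing $\Q$ concentrated above $\theta_\alpha$---for instance a canonical $\GCH$-coding forcing built from $\Add(\kappa,1)$ at various $\kappa>\theta_\alpha$---such that $V^{\P_\alpha * \Q}\models B_{\psi_\alpha}$, let $\dot\Q_\alpha$ name such a $\Q$; otherwise let $\dot\Q_\alpha$ be trivial. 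The coding locations used at stages $\alpha>0$ are arranged to lie in disjoint blocks of ordinals above $\omega$, so that $c$ remains Cohen-generic over the rest of the iteration; by the homogeneity of $\Add(\omega,1)$, it follows that $c$ is not ordinal-definable in $\Vbar=V[G]$, giving $\Vbar\models V\neq\HOD$.

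To verify that every $\Sigma_2$-definable nonempty set in $\Vbar$ contains an ordinal-definable element, fix a $\Sigma_2$-formula $\psi$ and suppose $\Vbar\models\exists x\,\psi(x)$. By semi-locality, $V_\theta^{\Vbar}\models\psi(x)$ for some witness $x$ and some $\theta$, and the same situation is already realized in some intermediate model $V^{\P_\alpha}$, with $V_\theta^{V^{\P_\alpha}}\models\psi(x)$. At that stage, $B_\psi$ is \emph{pressable}: force above $\theta$ to code the characteristic function of $x$ into the $\GCH$-pattern, making $x$ itself ordinal-definable in the resulting extension, while preservation of $V_\theta$ keeps $\psi(x)$ true. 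Since $\psi$ appears cofinally often in the enumeration, at some later stage $\beta\geq\alpha$ with $\psi_\beta=\psi$ the iteration successfully presses $B_\psi$; all still later stages force above strictly larger $\theta_\gamma$, preserving the $V_\theta$-witness through the end of the iteration, so $\Vbar\models B_\psi$, as required.

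The principal technical obstacle is the bookkeeping: one must (i) verify that the $\GCH$-coding performed at stages $\alpha>0$ does not inadvertently render $c$ ordinal-definable in $\Vbar$---handled by the disjoint-block choice of coding ordinals together with the homogeneity of $\Add(\omega,1)$; (ii) check that the class iteration $\P$ admits a well-defined class-length limit preserving $\ZFC$, which follows from the standard Easton-support apparatus combined with progressive closure of successive tails; and (iii) justify the ``pressability'' claim used in the verification, namely that whenever a $\Sigma_2$-witness $x$ exists in an intermediate model one can always find a high-up coding forcing that renders $x$ ordinal-definable in its extension without disturbing the $V_\theta$-witness to $\psi(x)$---a routine consequence of the flexibility of $\GCH$-coding combined with semi-locality of $\Sigma_2$.
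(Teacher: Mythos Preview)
Your overall strategy—an iteration in the style of the maximality-principle proof, pressing the buttons $B_\psi$ by $\GCH$-coding a witness—is the same idea the paper uses. But two choices you make diverge from the paper's argument, and one of them creates a genuine gap.

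\textbf{The gap: placing the Cohen forcing first.} You add the Cohen real $c$ at stage $0$ and then argue that, since the later coding lives in disjoint blocks above $\omega$ and $\Add(\omega,1)$ is homogeneous, $c$ remains non-$\OD$ in $\Vbar$. This inference does not go through as stated. Homogeneity of $\Add(\omega,1)$ tells you about $\OD$ in $V[c]$, not in $V[c][\text{tail}]$; for the conclusion you need, automorphisms of $\Add(\omega,1)$ must extend to automorphisms of the full iteration, which requires the tail-name $\dot\R$ to be invariant under those automorphisms. But the stage-$\alpha$ forcing $\dot\Q_\alpha$ is chosen in $V^{\P_\alpha}$—a model containing $c$—and you do not specify a choice mechanism that is independent of $c$. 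Concretely: if $V=L$ and $\psi(x)$ asserts ``$x$ is a real not in $L$,'' then every witness to $\psi$ in $V^{\P_\alpha}$ computes $c$, and coding any such witness into the $\GCH$ pattern makes $c$ itself $\OD$ in $\Vbar$. ``Disjoint coding blocks'' does nothing to prevent this. The paper sidesteps the whole issue by reversing the order: it first runs the iteration handling all the $\Sigma_2$ formulas, and only afterward adds a single Cohen subset $H$ of a regular cardinal $\delta$ above everything used so far. That last step is homogeneous over $V[G]$, so $V[G][H]\models V\neq\HOD$ immediately, and since it is ${\leq}\delta$-closed it does not disturb any $V_\theta$ witnessing a previously pressed button.

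\textbf{An unnecessary complication: class length.} There are only countably many parameter-free $\Sigma_2$ formulas, so an $\omega$-length full-support iteration suffices, and this is what the paper does. At stage $n$ one asks whether \emph{any} ${\leq}\theta_n$-closed forcing can make $\varphi_n$ hold of a nonempty set; if so, one does it and codes an element, and if not, then since the entire tail (including the final Cohen step) is ${\leq}\theta_n$-closed, $\varphi_n$ cannot define a nonempty set in the final model either. This single-pass argument eliminates the need for your cofinal enumeration and the attendant class-forcing and Easton-support bookkeeping you flag in point (ii).
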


The proof idea is that we shall perform a forcing iteration, considering each
$\Sigma_2$ formula in turn, where we try to freeze if possible the set defined
by that formula (in some suitable forcing extension) and then code one of its elements (if any) into
the GCH pattern high above the witness to that $\Sigma_2$
property. In the end, every nonempty $\Sigma_2$-definable set will
contain an ordinal-definable element and hence a definable element.

\begin{proof}
Start with $V$ as a ground model. Enumerate the
$\Sigma_2$ formulas $\varphi_0$, $\varphi_1$, $\varphi_2$, and so on. Note
that we may refer to $\Sigma_2$-truth since there is a universal
truth predicate for truth of bounded complexity (so there will be
no issues with Tarski's theorem on the non-definability of truth).
We define a full-support forcing iteration $\P$ of length
$\omega$, where the forcing at each stage will become
progressively more highly closed. At the first stage, we consider
the formula $\varphi_0$, and ask: is there a forcing extension
$V[g_0]$ in which $\varphi_0$ holds of a nonempty set $A_0$? If
so, we perform such a forcing (let the generic filter choose one amongst the set of minimal-rank such instances), and let $\lambda_0$ be the smallest $\beth$-fixed point
above the size of that forcing so that also $\varphi_0$ is
witnessed in $V_{\lambda_0}^{V[g_0]}$. Next, perform additional
$\leq\lambda_0$-closed forcing over $V[g_0]$ to an extension
$V[g_0][h_0]$, where $h_0$ forces to code one of the elements of
$A_0$ into the GCH pattern above $\lambda_0$. This preserves the
definition of $A_0$ by $\varphi_0$, while ensuring that $A_0$ has
an ordinal definable element. Now, let $\theta_1$ be well above
this coding, and continue.

At stage $n\geq 1$, we have forced to the partial extension
    $$V^{(n)}=V[g_0][h_0]\cdots[g_{n-1}][h_{n-1}]$$
by performing forcing below the cardinal $\theta_n$, which we had defined at the end of the previous stage. At this stage, we ask whether we can
perform further $\leqtheta_n$-closed forcing over this model in such a way that $\varphi_n$ will hold of a
nonempty set $A_n$ in the resulting extension. If so, we do that forcing. Let $\lambda_n$ be large enough to witness the $\Sigma_2$ property for $\varphi_n(A_n)$, and then perform \GCH\ coding forcing above this so as to
make an element of $A_n$ ordinal-definable, and let $\theta_{n+1}$ be larger than all that. If it was not possible to perform forcing so that $\varphi_n$ would hold of some $A_n$, then we ignore $\varphi_n$ and let
$\theta_{n+1}=\theta_n$.

Suppose $G\of\P$ is $V$-generic for the forcing we have described. Let $\delta$ be any regular cardinal above $\sup_n\theta_n$, and let $H\of\delta$ be a $V[G]$-generic Cohen subset of $\delta$. Consider the resulting forcing extension $V[G][H]$, our final model. Because we added the Cohen subset of $\delta$, which is ordinal-definable homogeneous forcing, it follows that $V[G][H]$ satisfies $V\neq\HOD$.

Nevertheless, we claim that every $\Sigma_2$-definable nonempty set in our model $V[G][H]$ has a definable element. Note first that because we used full support, it follows that the tail forcing in $\P$ after stage $n$
is $\leq\theta_n$-closed, as is the forcing to add $H$, and so this tail forcing
adds no new sets of rank below $\lambda_n$. Thus, if $\varphi_n$
defines a nonempty set in $V[G][H]$, then at stage $n$ we would
have observed that it was possible to force $\varphi_n$ to hold of a
nonempty set (with forcing that was sufficiently closed), and so
we would have treated it at stage $n$. That is, we would have
forced to code one of its elements into the GCH pattern,
afterwards always preserving that definition and this coding. So
in the case that $\varphi_n$ does define a nonempty set in
$V[G][H]$, then the stage $n$ forcing exactly ensured that one of
the elements of this set was coded into the GCH pattern of
$V[G][H]$ and was therefore ordinal-definable there. The later
stages of forcing were arranged so as to preserve all these
definitions. Since the set defined by $\varphi_n$ has an ordinal-definable element, the $\OD$-least such element is actually definable. So every $\Sigma_2$-definable nonempty set in $V[G][H]$ has a definable element, as desired.
\end{proof}

The proof of theorem~\ref{Theorem.VnotHODbutSigma2SetsHaveODMembers} has a certain resemblence to the the second author's forcing iteration proof of the maximality principle \cite{Hamkins2003:MaximalityPrinciple}, which one considers each sentence in turn, forcing it in such a way that it remains true in all further forcing extensions, if this is possible. The end result is a model
where any statement that could become necessarily true by forcing,
is already true, and this is precisely what the maximality
principle asserts.

\section{Allowing parameters}

Let us now generalize the previous analysis to allow parameters from an arbitrary $\Sigma_2$-definable class $X$, such as $X=\Ord$, or $X=\R$ or $X=\Ord^\omega$, corresponding to inner models $\HOD$, $\HOD(\R)$, and $\HOD(\Ord^\omega)$; the latter model $\HOD(\Ord^\omega)$ can be fruitfully viewed as an analogue of the Chang model $L(\Ord^\omega)$. We define $\OD(X)$ as the class of sets $x$ that are definable in some $V_\theta$ using parameters from $X$, which effectively also adds the ordinals as parameters, and $\HOD(X)$ is the class of sets hereditarily in $\OD(X)$. If $X$ is a proper class, then one doesn't ever actually need any ordinal parameters, since for every ordinal $\alpha$, there is an element $x\in X$ whose rank is the $\alpha^{th}$ among any elements of $X$, and so in this case, every ordinal is definable from an element of $X$. When $X$ is a set, however, such as $X=\R$, then in general we have no first-order expressible concept of $\R$-definable, and we form $\HOD(\R)$ by allowing also ordinal parameters.

Note that if $X$ is $\Sigma_2$ definable, then the class $\OD(X)$ is also $\Sigma_2$ definable, since $a\in\OD(X)$ just in case there is an ordinal $\theta$ and elements $\vec x$ from $X$ such that $a$ is definable from ordinal parameters and $\vec x$ in $V_\theta$, which also verifies that $\vec x$ are in $X$. So membership in $\OD(X)$ is verified by a property observable in some $V_\theta$, which as we mentioned earlier characterizes the $\Sigma_2$ properties.

Indeed, there is a $\Delta_2$-definable surjection of $X^{<\omega}\times\Ord$ onto $\OD(X)$. Namely, every element $a\in\OD(X)$ is definable in some $V_\theta$ by ordinal parameters and elements of $X$, and so we can map $(\vec x,\alpha)\mapsto a$, where $\alpha$ is an ordinal coding $\theta$ and the ordinal parameters below $\theta$ and the \Godel\ code of the formula being used. This map is $\Delta_2$ definable, since in any sufficiently large $V_\beta$ we can correctly recognize whether or not $(\vec x,\alpha)\mapsto a$.

\begin{theorem}
The following are equivalent in any model $M$ of \ZF, with any $\Sigma_2$-definable class $X$:
 \begin{enumerate}
   \item $M$ is a model of $\text{ZFC}+\text{V}=\HOD(X)$.
   \item $M$ has a $\Delta_2$-definable surjection of $X^{<\omega}\times\Ord$ onto $M$.
   \item Every definable nonempty set in $M$ has an $X$-definable element.
   \item Every definable nonempty set in $M$ has an $(\Ord,X)$-definable element, that is, an element in $\OD(X)$.
   \item $M$ thinks that every nonempty set in $\OD(X)$ has an element in $\OD(X)$.
   \item Every $\Pi_2$-definable nonempty set in $M$ has an member in $\OD(X)$.
 \end{enumerate}
\end{theorem}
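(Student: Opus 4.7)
The plan is to mirror the proof of Theorem \ref{Theorem.EquivalencesToV=HOD}, making only the modifications needed to accommodate the parameter class $X$. Two facts from the paragraph preceding the theorem statement will do most of the work: that $\OD(X)$ is itself $\Sigma_2$-definable, and that there is an explicit $\Delta_2$-definable surjection $f : X^{<\omega} \times \Ord \to \OD(X)$. These ensure that the complexity bookkeeping in the original argument transfers verbatim.

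For the ring of easier implications I would argue as follows. $(1 \Rightarrow 2)$: when $V = \HOD(X)$, we have $V = \OD(X)$, so the $\Delta_2$-definable surjection already on hand serves. $(2 \Rightarrow 3)$: given such a surjection $f$ and a definable nonempty set $A$, take the least $\alpha$ for which some $\vec{x} \in X^{<\omega}$ has $f(\vec{x}, \alpha) \in A$; when $X$ is a proper class, $\alpha$ is itself definable from an element of $X$, and the resulting element of $A$ is $X$-definable. $(3 \Rightarrow 4)$ is immediate. For $(4 \Rightarrow 5)$, suppose some nonempty $\OD(X)$ set has no $\OD(X)$ member; if $V = \HOD(X)$ this is absurd, and otherwise the set of minimal-rank non-$\OD(X)$ sets is definable, nonempty, and has no $\OD(X)$ element, contradicting $(4)$. $(5 \Rightarrow 6)$ is immediate since every $\Pi_2$-definable set lies in $\OD \subseteq \OD(X)$.

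The main work is $(6 \Rightarrow 1)$, which will follow the original proof almost exactly. Assuming $V \neq \HOD(X)$, I would let $A$ be the set of minimal-rank sets not in $\OD(X)$, let $\theta$ be least such that $A \in V_\theta$ and $V_\theta \models$ ``$A$ is the set of minimal-rank non-$\OD(X)$ sets,'' and set $U = A \times V_\theta$. The same case analysis as in Theorem \ref{Theorem.EquivalencesToV=HOD} then shows that $U$ is $\Pi_2$-definable: the product structure is $\Delta_0$, identifying $B$ as a $V_\theta$ is $\Pi_1$, the internal conditions on $V_\theta$ are $\Delta_0$ (all quantifiers bounded by $U$), and the clause $\forall a \in A\ a \notin \OD(X)$ is $\Pi_2$ precisely because $\OD(X)$ is $\Sigma_2$. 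Finally, $U$ is nonempty but has no $\OD(X)$ element, since any $\OD(X)$ pair would force its first coordinate into $\OD(X)$, contradicting its membership in $A$; this violates $(6)$.

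The main obstacle, such as it is, is simply verifying that each complexity bound from the original proof survives the replacement of $\OD$ by $\OD(X)$, and this reduces entirely to the $\Sigma_2$-definability of $\OD(X)$ recorded just before the theorem. No ideas beyond those in the proof of Theorem \ref{Theorem.EquivalencesToV=HOD} appear to be required.
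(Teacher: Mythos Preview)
Your overall strategy matches the paper's proof closely, including the key $(6\Rightarrow 1)$ step, where your construction of $U=A\times V_\theta$ and complexity analysis are essentially identical to the paper's. The $(4\Rightarrow 5)$ argument differs cosmetically: the paper takes the union of all minimal-rank $\OD(X)$ sets lacking $\OD(X)$ members as its definable witness, whereas you detour through the observation that $\neg(5)$ forces $V\neq\HOD(X)$ and then invoke the set of minimal-rank non-$\OD(X)$ sets; both work.

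There is, however, a small but genuine gap in your $(2\Rightarrow 3)$. After selecting the least $\alpha$ for which \emph{some} $\vec{x}$ has $f(\vec{x},\alpha)\in A$, you have not yet singled out a unique element of $A$: many tuples $\vec{x}$ may map into $A$ at this $\alpha$, and you give no rule for choosing among them (nor can you well-order $X^{<\omega}$ definably in general). Your remark that $\alpha$ is $X$-definable when $X$ is a proper class does not resolve this, and it also leaves the set case unaddressed. The paper avoids the issue by reversing the order of selection: pick \emph{any} pair $(\vec{x},\alpha)$ with $d(\vec{x},\alpha)\in A$, then let $\beta\leq\alpha$ be least with $d(\vec{x},\beta)\in A$. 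The element $d(\vec{x},\beta)$ is then determined outright by $\vec{x}$ (together with the definable $d$ and $A$), hence $X$-definable, and no case split on whether $X$ is a set or proper class is needed.
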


\begin{proof}
($1\to 2$) If $M$ is a model of $V=\HOD(X)$, then the $\Delta_2$-definable surjection of $X^{<\omega}\times\Ord$ onto $\OD(X)$ is actually onto $M$.

($2\to 3$) If there is a definable surjection $d:X^{<\omega}\times\Ord\to M$ and $A$ is a definable nonempty set, then there is some $a=d(\vec x,\alpha)\in A$ for some $\vec x\in X^{<\omega}$. Let $\beta\leq\alpha$ be least such that $d(\vec x,\beta)\in A$. The object $d(\vec x,\beta)$ is in $A$ and definable from parameters $\vec x$.

($3\to 4$) Immediate.

($4\to 5$) If there is a nonempty set in $\OD(X)$ with no elements in $\OD(X)$, then let $A$ be the union of all minimal-rank such sets. Since $X$ is definable, the set $A$ is a definable set with no members in $\OD(X)$.

($5\to 6$) Immediate.

($6\to 1$) Assume $V\neq\HOD(X)$. Since $X$ itself is $\Sigma_2$-definable, it follows that if some $V_\theta$ thinks $x\in X$, then it is right about that. Let $A$ be the set of minimal-rank non-$\OD(X)$ sets, and let $\theta$ be least such that $V_\theta$ can see that all the other members of that rank or of smaller rank are in $\OD(X)$. The set $A\times V_\theta$ is now $\Pi_2$-definable, since we need only say that the members of $A$ are not in $\OD(X)$ and $V_\theta$ thinks that all the other members of that rank or less are in $\OD(X)$ and that no smaller $\theta'<\theta$ thinks that. But no member of $A\times V_\theta$ can be in $\OD(X)$, contrary to (6).
\end{proof}

Notice that the generalization of theorem \ref{Theorem.VnotHODbutSigma2SetsHaveODMembers} to the context with parameters is a consequence of theorem \ref{Theorem.VnotHODbutSigma2SetsHaveODMembers} itself.

\begin{theorem}
Suppose $X$ is a $\Sigma_2$-definable class with the property that $V\neq\OD(X)$ is forceable over any forcing extension, by forcing preserving any desired $V_\theta$. Then there is a forcing extension of the universe in which $V\neq\OD(X)$, yet every $\Sigma_2$-definable nonempty set has an $(\Ord,X)$-definable member.
\end{theorem}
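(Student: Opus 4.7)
The plan is to adapt the forcing iteration from the proof of Theorem~\ref{Theorem.VnotHODbutSigma2SetsHaveODMembers}, replacing its final step (adding a Cohen subset of a large regular cardinal to ensure $V\neq\HOD$) with an invocation of the hypothesis that $V\neq\OD(X)$ can be forced while preserving any prescribed $V_\theta$. Starting with $V$, enumerate the $\Sigma_2$ formulas $\varphi_0,\varphi_1,\ldots$ and define a full-support iteration $\P$ of length $\omega$. At stage $n$, having built the intermediate extension $V^{(n)}$ by forcing bounded below a cardinal $\theta_n$ fixed at the end of stage $n-1$, ask whether some further $\leq\theta_n$-closed forcing makes $\varphi_n$ hold of a nonempty set $A_n$. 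If so, perform such forcing, pick $\lambda_n$ large enough that $\varphi_n(A_n)$ is witnessed inside $V_{\lambda_n}$, and then do $\GCH$-coding forcing above $\lambda_n$ to code a chosen element of $A_n$ into the $\GCH$ pattern, so that in any further extension preserving $V_{\lambda_n}$ and the coding this element is ordinal-definable; finally set $\theta_{n+1}$ above all this activity. If no such forcing exists, simply set $\theta_{n+1}=\theta_n$ and move on.

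After obtaining $V[G]$ via this iteration, invoke the hypothesis on $X$ to force over $V[G]$ to an extension $V[G][H]$ in which $V\neq\OD(X)$, while preserving some $V_\theta^{V[G]}$ with $\theta$ chosen above $\sup_n\theta_n$ and large enough to contain all of the coded witnesses. Because the tail of $\P$ past stage $n$ is $\leq\theta_n$-closed under full support, and because the final $V\neq\OD(X)$-producing forcing preserves $V_\theta$ by hypothesis, no new sets of rank below $\lambda_n$ appear in $V[G][H]$ beyond those already present at stage $n$. Consequently, the reflection property of $\Sigma_2$ assertions ensures that any $\Sigma_2$ fact about a set in $V[G][H]$ is already witnessed in the preserved initial segments.

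Now, if $\varphi_n$ defines a nonempty set in $V[G][H]$, then by this reflection the construction at stage $n$ would have observed that sufficiently closed forcing makes $\varphi_n$ hold of a nonempty set, and would therefore have coded one of its elements into the $\GCH$ pattern, placing an ordinal-definable, hence $\OD(X)$, element into that set. Taking the $\OD(X)$-least such element yields an $(\Ord,X)$-definable witness, which is what the theorem demands. I expect the main obstacle to be the scheduling of closure thresholds: the iteration must be arranged so that both the tail of $\P$ and the final $V\neq\OD(X)$-forcing jointly preserve each $V_{\lambda_n}$ together with its coded witnesses. The hypothesis that $V\neq\OD(X)$ be forceable \emph{while preserving any desired $V_\theta$} is precisely what licenses this alignment, since it lets us postpone the choice of the preserved rank-initial segment until after the entire $\omega$-iteration and all of its coding has taken place.
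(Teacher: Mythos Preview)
Your proposal is correct and follows essentially the same approach as the paper: run the iteration of Theorem~\ref{Theorem.VnotHODbutSigma2SetsHaveODMembers} to produce $V[G]$, then replace the final Cohen forcing by an invocation of the hypothesis to force $V\neq\OD(X)$ while preserving a sufficiently large $V_\theta$, so that the coded ordinal-definable witnesses survive into $V[G][H]$. The paper's proof is simply a terser version of what you wrote, citing Theorem~\ref{Theorem.VnotHODbutSigma2SetsHaveODMembers} directly rather than re-deriving the iteration.
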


To be clear, in the theorem we reinterpret $X$ in the forcing extensions using its $\Sigma_2$-definition; the forcing may add new elements to this definable class. In important cases such as when $X=\R$, when $X$ is a set, when $X=\Ord$, when $X=\Ord^\omega$, our forcing will not actually add new elements to $X$. The hypothesis that $V\neq\OD(X)$ is forceable is true in the cases of $X$ just mentioned, since one can add a Cohen subset to a large regular cardinal; this will not add elements to $X$ and by homogeneity one will achieve $V\neq\OD(X)$.

\begin{proof}
We use essentially the same model $V[G][H]$ provided by theorem \ref{Theorem.VnotHODbutSigma2SetsHaveODMembers}, except at the top we arrange the $H$ forcing so as to ensure $V\neq\OD(X)$. The argument of theorem \ref{Theorem.VnotHODbutSigma2SetsHaveODMembers} shows that every $\Sigma_2$-definable set in $V[G][H]$ has a definable member, which is therefore also $(\Ord,X)$-definable.
\end{proof}

\bibliographystyle{alpha}
\bibliography{MathBiblio,HamkinsBiblio,WebPosts}

\end{document}